\DeclarePairedDelimiter\abs{\lvert}{\rvert}%
\DeclarePairedDelimiter\norm{\lVert}{\rVert}%
\DeclarePairedDelimiter\angleBrac{\langle}{\rangle}%
\DeclareMathOperator{\Trace}{\mathrm{Tr}}
\DeclareMathOperator{\diag}{\mathrm{diag}}
\DeclareMathOperator{\Imag}{\mathrm{Im}}
\let\oldabs\abs
\def\abs{\@ifstar{\oldabs}{\oldabs*}}
\let\oldnorm\norm
\def\norm{\@ifstar{\oldnorm}{\oldnorm*}}
\let\oldangleBrac\angleBrac
\def\angleBrac{\@ifstar{\oldangleBrac}{\oldangleBrac*}}
\newtheorem{theorem}{Theorem}[section]
\newtheorem{lemma}[theorem]{Lemma}
\newtheorem{corollary}{Corollary}[theorem]
\newtheorem{assumption}{Assumption}
\theoremstyle{definition}
\newtheorem{definition}[theorem]{Definition}
\theoremstyle{remark}
\newtheorem{remark}[theorem]{Remark}
\numberwithin{equation}{section}
\newcommand{\bdrsuppress}[1]{}
\begin{document}

\title[The Local Ledoit-P\'{e}ch\'{e} Law]{The Local Ledoit-P\'{e}ch\'{e} Law}


\author{Van Latimer}
\author{Benjamin D$.$ Robinson}
\address{}
\curraddr{}
\email{}
\thanks{}



\date{}

\dedicatory{}

\begin{abstract}
  Ledoit and P\'{e}ch\'{e}, in \cite{LP}, proved convergence of certain functions of a random covariance matrix's resolvent; we refer to this as the \emph{Ledoit-P\'{e}ch\'{e} law}. One important application of their result is shrinkage covariance estimation with respect to so-called Minimum Variance (MV) loss, discussed in the work of Ledoit and Wolf \cite{ANS}. We provide an essentially optimal rate of convergence and hypothesize it to be the smallest possible rate of excess MV loss within the shrinkage class.
\end{abstract}

\maketitle
\section{Introduction}    \label{sec:Intro}

Let $X$ be an $M\times N$ matrix; we assume that $\frac{M}{N}$ converges to a limit $\phi$ as both $M$ and $N$ tend to $\infty$ (although this may be relaxed). Let
\begin{equation}
  \Sigma = VDV^*, \quad V = \begin{pmatrix} \mid & & \mid \\ \mathbf{v}_1 & \dots & \mathbf{v}_M \\ \mid & & \mid \end{pmatrix}, \quad D = \diag(\tau_1, \dots, \tau_M)
\end{equation}
with $\tau_1 \geq \cdots \geq \tau_M$, be an $M\times M$ real symmetric or complex Hermitian positive definite matrix together with its eigendecomposition.

We will make the following assumption about the ``training-data'' matrix $X$. 

\begin{assumption}    \label{assump:main}
$\Sigma$ is diagonal and the $M\times N$ matrix $X$ has i.i.d$.$ columns $\sim \mathcal{N}(0,\Sigma)$.
\end{assumption}

As is common in Random Matrix Theory, the dimensions $N$ and $M$ of $X$ and $\Sigma$, and of most every other matrix we will study, are assumed to go to infinity. Thus practically every major quantity of interest is a sequence of quantities, and all properties that we desire to study are those which emerge in the large dimensional limit. We therefore always, except perhaps when special emphasis is needed, suppress the dependence of matrices and functions thereof on the dimensions $N$, $M$, etc. We consider the sample covariance matrix
\begin{equation}
  S = \Sigma^{1/2} XX^* \Sigma^{1/2}
\end{equation}
We let
\begin{equation}
  S = ULU^*, \quad U = \begin{pmatrix} \mid & & \mid \\ \mathbf{u}_1 & \dots & \mathbf{u}_M \\ \mid & & \mid \end{pmatrix}, \quad L = \diag(\lambda_1, \dots, \lambda_M)
\end{equation}
with $\lambda_1 \geq \cdots \geq \lambda_M$ be its spectral decomposition.

It is a problem of great theoretical and practical interest to understand how the properties of the sample covariance matrix relate to properties of the population covariance matrix $\Sigma$. \bdrsuppress{(citation here?)}

Random Matrix Theory has had great success in the last decade in getting very fine control of random matrices $H$ by way of their resolvent
\begin{equation}
  (H-zI)^{-1}
\end{equation}
This was first done by Mar\v{c}enko and Pastur (\cite{MP67}). Their approach was to show that the trace of the resolvent of a random matrix approximately satisfies some self-consistent equation and then to reason that trace of the resolvent, which is also the Stieltjes transform of the emperical eigenvalue measure, must be close to the true solution to the self-consistent equation. This has remained a popular and powerful technique.  

One finds that the resolvent $R_M := R_M(z):= (S-zI)^{-1}$ of $S$ can be written as 
\begin{equation}
  \sum_{i=1}^M \frac{1}{\lambda_i - z} \mathbf{u}_i\mathbf{u}_i^*
\end{equation}

Ledoit and P\'{e}ch\'{e}, in their paper \cite{LP}, consider functions of the form
\begin{equation}
  \Theta(z):= \sum_{i=1}^M \sum_{j=1}^M \frac{1}{\lambda_i -z} \mathbf{u}_i\mathbf{u}_i^* g(\tau_j)\mathbf{v}_j\mathbf{v}_j^*
\end{equation}
for some function $g:\mathbb{R}\to \mathbb{R}$ with finitely many discontinuities, which amounts to a weighting of the spectral decomposition of the resolvent; components of the resolvent in different eigendirections of the population covariance matrix are weighted according to the value of $g$ applied to the associated eigenvalue of the population covariance matrix. $\Theta$ may be simplified as follows:
\begin{equation}
  \Theta(z) = \Trace((S-zI)^{-1} g(\Sigma))
\end{equation}
Here we recall that for a function $g:\mathbb{R}\to \mathbb{R}$ and a diagonal matrix $D$, we define a matrix $g(D)$ by 
\begin{equation}
  [g(D)]_{ij} := g(D_{ij})\delta_{ij},
\end{equation}
and for a real symmetric or complex Hermitian matrix $A$ with spectral decomposition $A = U^*DU$, we define
\begin{equation}
  g(A) := U^*g(D)U
\end{equation}

The case $g \equiv 1$ is of course of interest. In this case we have
\begin{equation}    \label{eq:CaseOfgEq1}
  \Theta(z) = \Trace((S-zI)^{-1}) = \mathcal{S}\left(\mu\right), \quad \mu := \sum_{i=1}^M \delta_{\lambda_i}
\end{equation}
where for a measure $\sigma$ we define the Stieltjes transform $\mathcal{S}(\sigma): \mathbb{H} \to \mathbb{H}$ by
\begin{equation}
  \mathcal{S}(\sigma)(z) := \int \frac{d\sigma(x)}{x-z}
\end{equation}
and where by $\mathbb{H}$ we denote the complex upper half-plane
\begin{equation}
  \mathbb{H} := \set{z= E + i\eta \in \mathbb{C} \mid \eta > 0}
\end{equation}
We note that $E$ and $\eta$ are usually the way we will denote the real and imaginary parts of a complex argument to a Stieltjes transform.

The quantity \eqref{eq:CaseOfgEq1} has been deeply understood for a fairly general class of matrices, which we will detail soon.

We will also particularly interested in the case that $g$ is the identity, ie, $g(x) \equiv x$. In this case, $\Theta$ has another simplification:
\begin{equation}    \label{eq:STofEstMeas}
  \Theta = \mathcal{S}\left(\nu\right)
\end{equation}
where
\begin{equation}
  \nu := \sum_{i=1}^M \mathbf{u}_i^*\Sigma \mathbf{u}_i \delta_{\lambda_i}
\end{equation}

The reason this case of $g$ is of particular interest to us is that the quantities
\begin{equation}
  \mathbf{u}_i^*\Sigma\mathbf{u}_i
\end{equation}
are precisely the quantities which describe the Frobenius-norm optimal rotation equivariant shrinkage estimator for the population covariance matrix, as shown in \cite{LP}. Another result of the same paper was bounding $\nu$ close to a deterministic measure was one result of, although they did not provide rates of convergence. In this paper we improve their result by providing essentially optimal rate of convergence.

\begin{definition} [Shrinkage Estimators and Loss Function]
  Given a realization of the sample covariance matrix $S$, we define a (rotation equivariant) shrinkage estimator $\hat{\Sigma}$ for the population covariance matrix $\Sigma$ via
  \begin{equation}
    \hat{\Sigma} = U \hat{D} U^*
  \end{equation}
  for some diagonal matrix $\hat{D}$. That is, we estimate the $\Sigma$ from $S$ by keeping the eigenvectors and changing the eigenvalues, presumably ``shrinking'' them since $S$ has the tendency to ``spread out'' the eigenvalues of $\Sigma$.

  To measure the success of $\hat{\Sigma}$ we define the loss function
  \begin{equation}
    \mathcal{L}^{MV} (\hat{\Sigma}, \Sigma) := \frac{\Trace(\hat{\Sigma}^{-1} \Sigma \hat{\Sigma}^{-1})/N}{\left[ \Trace \left(\hat{\Sigma}^{-1}\right)/N \right]^2} - \frac{1}{\Trace\left(\Sigma^{-1}\right)/N}
  \end{equation}
\end{definition}
Here $MV$ stands for minimum variance, and ``$\mathcal{L}^{MV}$ represents the \emph{true} variance of the linear combination of the original variables that has the minimum \emph{estimated} variance.'' (See \cite{ANS} for the quote and for more discussion of the suitability of this loss function.)

\begin{lemma}    \label{lem:optimalShrinkForm}
  With respect to $\mathcal{L}^{MV}$, the optimal shrinkage estimator is given when $\hat{D}_{ii} = \beta \mathbf{u}_i^*\Sigma \mathbf{u}$, where $\beta \neq 0$ is a scaling constant that we will take to be 1. 
\end{lemma}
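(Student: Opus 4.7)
The plan is to write out $\mathcal{L}^{MV}(\hat\Sigma,\Sigma)$ explicitly as a function of the diagonal entries $\hat D_{ii}$ and the population-projected quantities $a_i := \mathbf{u}_i^*\Sigma \mathbf{u}_i$, and then to minimize by a scale-invariant Cauchy--Schwarz argument. Since $\hat\Sigma = U\hat D U^*$ with $U$ unitary, we have $\hat\Sigma^{-1} = U\hat D^{-1}U^*$, so cycling the trace gives
\begin{equation}
  \Trace(\hat\Sigma^{-1}\Sigma\hat\Sigma^{-1}) = \Trace(\hat D^{-1}U^*\Sigma U\hat D^{-1}) = \sum_{i=1}^M \frac{a_i}{\hat D_{ii}^2},
  \qquad \Trace(\hat\Sigma^{-1}) = \sum_{i=1}^M \frac{1}{\hat D_{ii}}.
\end{equation}
Note that $a_i>0$ because $\Sigma$ is positive definite. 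The subtracted term $1/(\Trace(\Sigma^{-1})/N)$ does not depend on $\hat D$, so minimizing $\mathcal{L}^{MV}$ is equivalent to minimizing
\begin{equation}
  F(\hat D) := \frac{\sum_i a_i \hat D_{ii}^{-2}}{\bigl(\sum_i \hat D_{ii}^{-1}\bigr)^2},
\end{equation}
which is manifestly invariant under the rescaling $\hat D \mapsto c\hat D$ for $c\ne 0$; this is the source of the scalar ambiguity $\beta$ in the statement.

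Setting $d_i := \hat D_{ii}^{-1}$, the Cauchy--Schwarz inequality applied to the vectors $(\sqrt{a_i}\,d_i)_i$ and $(1/\sqrt{a_i})_i$ yields
\begin{equation}
  \Bigl(\sum_i d_i\Bigr)^2 \;=\; \Bigl(\sum_i \sqrt{a_i}\,d_i\cdot\tfrac{1}{\sqrt{a_i}}\Bigr)^2 \;\leq\; \Bigl(\sum_i a_i d_i^2\Bigr)\Bigl(\sum_i a_i^{-1}\Bigr),
\end{equation}
so $F(\hat D) \geq 1/\sum_i a_i^{-1}$, with equality iff $\sqrt{a_i}\,d_i$ is proportional to $1/\sqrt{a_i}$, i.e.\ iff $d_i \propto 1/a_i$, i.e.\ iff $\hat D_{ii} = \beta\, a_i = \beta\, \mathbf{u}_i^*\Sigma\mathbf{u}_i$ for some $\beta\neq 0$. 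This both identifies the minimizer and confirms the claimed form.

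There is no real obstacle here: the only things to verify are that $\Sigma>0$ makes all $a_i$ strictly positive (so the Cauchy--Schwarz manipulation is legitimate), and that the minimum of the ratio is genuinely attained in the closure of the admissible set, which follows from the scale invariance plus the strict convexity of $\sum_i a_i d_i^2$ on the hyperplane $\sum_i d_i = \text{const}$. One could equally well derive the critical point via a Lagrange multiplier on this hyperplane and then observe uniqueness by strict convexity, but the Cauchy--Schwarz route makes the optimum and the equality condition simultaneously transparent.
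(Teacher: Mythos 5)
Your argument is correct, and it is essentially the standard one: the paper itself states Lemma \ref{lem:optimalShrinkForm} without proof, deferring to the discussion of the MV loss in \cite{ANS}, so your Cauchy--Schwarz derivation supplies a self-contained verification rather than diverging from a proof in the text. The key steps all check out: by unitarity of $U$ and cyclicity of the trace, $\Trace(\hat\Sigma^{-1}\Sigma\hat\Sigma^{-1})=\sum_i a_i\hat D_{ii}^{-2}$ with $a_i=\mathbf{u}_i^*\Sigma\mathbf{u}_i>0$, the subtracted term and the factors of $N$ are independent of $\hat D$, and the Cauchy--Schwarz equality condition pins down $\hat D_{ii}=\beta a_i$ with $\beta\neq 0$ (the case of proportionality constant zero being excluded since $\hat D$ must be invertible). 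Two small tidying remarks: your closing paragraph about attainment ``in the closure'' and strict convexity is superfluous, since Cauchy--Schwarz already exhibits the minimum as attained at the admissible point $d_i=1/a_i$; and if one restricts to genuine (positive definite) shrinkage estimators one should note that among such $\hat D$ the same bound and equality case hold with $\beta>0$, while for mixed-sign $\hat D$ the degenerate possibility $\sum_i \hat D_{ii}^{-1}=0$ must be set aside because the loss is then undefined. Neither point affects the validity of the proof.
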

\begin{definition}
  We define
  \begin{equation}
    D^{\mathrm{or}} := \mathrm{diag}(\mathbf{u}_i^*\Sigma \mathbf{u}_i)_i, \quad \Sigma^{\mathrm{or}} := U D^{\mathrm{or}} U^*
  \end{equation}
  and we call $\Sigma^{\mathrm{or}}$ the \emph{shrinkage oracle}. 
\end{definition}
\begin{remark}
  As noted above, the same choice of $\hat{D}$ is optimal for a Frobenius norm loss function \cite{LP}.
\end{remark}

The optimal shrunken eigenvalues $\mathbf{u}_i^*\Sigma \mathbf{u}_i$ are experimentally unavailable to us, so for statistical purposes Lemma \ref{lem:optimalShrinkForm} is of limited use to us. However, just as was done for $\mu$, in \cite{ALLRM}, namely, bounding it optimally close to a determinstic limit, we will do for $\nu$. 

Given our random matrix with independent entries $X$ and our population covariance matrix $\Sigma$, we define the resolvent, or Green function introduced in \cite{ALLRM}:
\begin{equation}
  G(z) := \begin{pmatrix} -\Sigma^{-1} & X \\ X^* & -zI \end{pmatrix}
\end{equation}
The reader accustomed to Random Matrix Theory will note that this is not the usual defintion of the resolvent. It is however an important realization made in \cite{ALLRM} that the more familiar resolvent
\begin{equation}
  R_M := (S-zI)^{-1}
\end{equation}
can be neatly gotten from $G$, as well as the related resolvent
\begin{equation}
  R_N := (X^* \Sigma X-zI)^{-1}
\end{equation}
and that this single matrix containing both resolvents unlocks powerful tools for studying resolvent estimates developed in the context of Wigner matrices. In fact, $G$ may be decomposed in block form as
\begin{equation}
  \begin{pmatrix} z\Sigma^{1/2}R_M\Sigma^{1/2} & \cdot \\ \cdot & R_N \end{pmatrix}
\end{equation}
where the blocks labeled $\cdot$ are not of interest to us currently. The conjugation of $R_M$  by $\Sigma^{1/2}$ is not of great importance to the authors of \cite{ALLRM}, but it is very fortunate for us, the reason being that $\Theta$, our object of greatest interest, is precisely
\begin{equation}
  \Theta = \Trace(R_M\Sigma) = \Trace (\Sigma^{1/2}R_M\Sigma^{1/2})
\end{equation}
by the invariance of $\Trace$ under cyclical permutation. Let us make a few more definitions and then quote a result of \cite{ALLRM}.

We define the the population spectral measure, or \emph{PSM}, of $\Sigma$ by
\begin{equation}
  \pi = \frac{1}{M}\sum_{j=1}^M \delta_{\tau_i}
\end{equation}
This is of course just the probability measure which places equal weight at each of $\Sigma$'s eigenvalues, counted with multiplicity.

We also define the following notation of size for random variables, introduced in \cite{ErKnYa13}, which has proven very helpful for formulating results in RMT. 

\begin{definition}[Stochastic Domination]
  Given two sequences of random variables $X:= \{X_N\}_{N\in\mathbb{N}}$ and $Y:= \{Y_N\}_{N\in\mathbb{N}}$ (note that we again suppress that quantities of interest are sequence in $N$), we say that $Y$ \emph{stochastically dominates} $X$, or that $X \prec Y$, if for any (small) $\epsilon > 0$, (large) $D > 0$, and sufficiently large $N$, we have
  \begin{equation}
    P(X > N^\epsilon Y) < N^{-D}
  \end{equation}
\end{definition}

A little more notation: we define the important function $m:\mathbb{H}\to \mathbb{H}$ as the unique such value solving
\begin{equation}    \label{eq:mDef}
  \frac{1}{m} = -z + \phi \int \frac{x}{1 + mx} \mathrm{d} \pi(x)
\end{equation}
for $z \in \mathbb{H}$. 

We will list some things that we know about $m$.
\begin{itemize}
\item The equation \eqref{eq:mDef}, which we have taken as the definition of $m$, is the one which allows us to make the connection between the different definitions of $\delta$ in the papers \cite{LP} and \cite{ALLRM}.
\item $m$ is also the unique solution to $f(m) = z$, where
  \begin{equation}
    f(x) = -\frac{1}{x} + \phi\sum_{\tau_i\neq 0} \frac{1}{x + \tau_i^{-1}}.
  \end{equation}
  
  
\item $\lim_{\mathbb{H} \ni z \to E} m(z) := \check{m}(E)$ exists and is given by
  \begin{equation}
    \check{m}(E) = \pi\left(\mathcal{H}w(E) + iw(E)\right)
  \end{equation}
  where $w:=\frac{d\varrho}{d\lambda}$ is the Radon-Nikodym derivative of $\varrho$ with respect to $\lambda$, and where $\mathcal{H}$ is the Hilbert transform (see \cite{ANS}). We mention Hilbert transforms because \cite{ANS} presents it this way and explains how the presence of the Hilbert transform provides a theoretical explanation for the phenomenon of eigenvalue shrinkage, but we will not heavily use the Hilbert transform in our treatment; we will use a equation from \cite{ALLRM} which $m$ satisfies to get control of $m$'s real and imaginary parts directly.

\end{itemize}

At this time, let us also define the shrinkage function
\begin{equation}    \label{eq:deltaDef}
  \delta(x) := \frac{x}{[\pi cx w(x)]^2 + [1-c-\pi cx\mathcal{H}w(x)]^2}
\end{equation}
This function appeared first in a slightly different form in \cite{LP} and then in its stated form in \cite{ANS}; in both cases it is useful to us as an approximator to the values $\mathbf{u}_i^* \Sigma \mathbf{u}_i$ which describe the optimal shrinkage estimator. 
\begin{remark}
  We note that there is a small discrepancy between our definition of $m$ and the definition of $m$ in the context of \cite{LP} for $ M/N \gtrsim 1+\epsilon$. However, this discrepancy only amounts to how the limiting empirical spectral measure weights 0, and thus can be easily accounted for. 
\end{remark}

\begin{theorem}[Informal Statement of \cite{ALLRM}'s main result]    \label{thm:ALLRMInformal}
  Define the matrix
  \begin{equation}
    \Pi := \Pi(z) := \begin{pmatrix} -\Sigma(I + m(z)\Sigma)^{-1} & 0 \\ 0 & m(z)I \end{pmatrix}
  \end{equation}
  Then element by element, $G$ is very close to $\Pi$.
\end{theorem}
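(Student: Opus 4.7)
The plan is to reproduce the resolvent / Dyson-equation argument of \cite{ALLRM}. The matrix $\Pi$ is exactly the solution of a deterministic matrix self-consistent equation whose scalar trace version is \eqref{eq:mDef}. The overall strategy is therefore to show that $G$ satisfies a perturbation of the same equation with small error, and then to use quantitative stability of the equation around $\Pi$ to propagate smallness of the error into entrywise closeness of $G$ to $\Pi$.

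First, I would apply Schur complement (minor) expansions to the block linearization whose inverse defines $G$. Deleting a row/column associated with an index $k$ on the $X$-side yields an identity expressing $(G^{-1})_{kk}$ in terms of the corresponding column of $X$ paired against the minor $G^{(k)}$, and a parallel identity holds on the $\Sigma$-side. Since the columns of $X$ are Gaussian and independent of the corresponding minor, a Hanson--Wright style concentration estimate for quadratic forms lets me replace each such quadratic form by a (partial) trace of the minor plus a fluctuation whose size is controlled by the Hilbert--Schmidt norm of that minor. Using an a priori bound on $G$ (initially the trivial bound $\|G\|\le 1/\eta$), these fluctuations are $\prec$-small in the sense of the stochastic domination defined above.

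Combining these identities, the diagonal blocks of $G$ satisfy, up to a $\prec$-small remainder, exactly the equation that characterizes $\Pi$, while the off-diagonal entries of $G$ can be shown to be $\prec$-small directly from the same concentration input. A quantitative stability statement for the self-consistent equation --- essentially an implicit function theorem around the solution $m(z)$ --- then converts the small residual into entrywise closeness of $G$ to $\Pi$. The strength of the stability bound depends on $\Imag m(z)$ and the distance of $\Real z$ from the edges of the support of the limiting spectral density.

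The most delicate step is the multiscale bootstrap. One starts at large $\eta = \Imag z$, where $\|G\|\le 1/\eta$ makes every estimate crude but trivially closeable, and then continuously lowers $\eta$, at each scale feeding the previously established closeness back as the a priori input to the concentration step at the next smaller scale. The hard part is arranging for this loop to close all the way down to the optimal scale $\eta \gtrsim 1/N$ uniformly in $z$: one must carefully track the controlled degradation of stability near the spectral edges to guarantee that the Hanson--Wright error never overwhelms what stability can absorb, and one must maintain the a priori bound on $G$ (improved from the trivial $1/\eta$ down to the target) in tandem with the fluctuation bound throughout the descent.
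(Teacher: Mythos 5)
The paper gives no proof of this statement---it is an informal quotation of the main result of \cite{ALLRM}---and your outline faithfully reproduces the proof strategy of that cited paper: Schur-complement expansions of the linearized resolvent, concentration of Gaussian quadratic forms against the independent minors, quantitative stability of the self-consistent equation defining $m(z)$, and a continuity/bootstrap argument in $\eta$ down to the local scale $\eta \gtrsim N^{-1}$. The only caveat is that this argument, as sketched, delivers the entrywise closeness asserted here; the stronger anisotropic form quoted later as Theorem \ref{thm:EWLocalLaw} (arbitrary deterministic directions $\mathbf{v},\mathbf{w}$) requires the additional polynomialization/moment-method step of \cite{ALLRM}, which your sketch does not address.
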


\begin{theorem}[Slightly more Formal Statement of part of Theorem \ref{thm:ALLRMInformal}]    \label{thm:bottomCornerTrace}
  If $\Sigma$'s population spectral measure $\pi$ satisfies some mild regularity constraints, then
  \begin{equation}    
    \frac{1}{N}\Trace(R_N(z)) - m(z) = O_\prec\left((N\eta)^{-1}\right)
  \end{equation}
\end{theorem}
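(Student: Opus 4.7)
The plan is to observe that $\frac{1}{N}\Trace(R_N(z))$ is exactly $\frac{1}{N}$ times the trace of the bottom-right $N\times N$ block of $G(z)$, while $m(z)$ is $\frac{1}{N}$ times the trace of the bottom-right block of $\Pi(z)$, since that block equals $m(z)I_N$. Hence
\begin{equation}
\frac{1}{N}\Trace(R_N(z)) - m(z) = \frac{1}{N}\sum_{a=M+1}^{M+N}\bigl(G_{aa}(z) - \Pi_{aa}(z)\bigr),
\end{equation}
so the theorem reduces to an \emph{averaged} (as opposed to entrywise) version of the conclusion of Theorem \ref{thm:ALLRMInformal} on the lower-right block.

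First I would apply a Schur complement expansion to each diagonal entry $G_{aa}$ for $a$ in the $N$-block: writing the corresponding row/column removal as $G^{(a)}$, one gets $G_{aa} = (-z - \mathbf{x}_a^* \widetilde{G}^{(a)} \mathbf{x}_a)^{-1}$, where $\mathbf{x}_a$ is the appropriate column of $X$ and $\widetilde{G}^{(a)}$ is a Schur-complement-adjusted resolvent independent of $\mathbf{x}_a$. Next, I would apply standard large-deviation estimates for quadratic forms in independent sub-Gaussian vectors (as in \cite{ErKnYa13}) to show that $\mathbf{x}_a^* \widetilde{G}^{(a)} \mathbf{x}_a$ concentrates around $\frac{1}{N}\Trace(\Sigma \widetilde{G}^{(a)})$ with fluctuation $\prec \sqrt{\Imag m /(N\eta)}$. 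Combining with the resolvent identities that control the difference between $\widetilde{G}^{(a)}$ and $G$, this produces an approximate self-consistent equation for each $G_{aa}$, which matches the equation \eqref{eq:mDef} defining $m$ up to an error $\mathcal{E}_a$.

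The averaged bound of order $(N\eta)^{-1}$, rather than the entrywise $(N\eta)^{-1/2}$, then comes from the \emph{fluctuation averaging} step: rather than bounding $\frac{1}{N}\sum_a \mathcal{E}_a$ by the triangle inequality, I would iterate the resolvent expansion one more time and use the independence across the columns $\mathbf{x}_a$ to obtain cancellation, gaining a factor $N^{-1/2}$. Finally, I would feed the resulting perturbed self-consistent equation into a stability lemma for \eqref{eq:mDef}: since $m(z)$ is the unique solution in $\mathbb{H}$ and the derivative of $f$ is bounded away from zero under the assumed regularity on $\pi$, the $O_\prec((N\eta)^{-1})$ residual in the equation transfers to an $O_\prec((N\eta)^{-1})$ bound on $\frac{1}{N}\Trace(R_N)-m$.

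The hardest step is the fluctuation averaging: it requires careful bookkeeping of the higher-order Schur expansions and combinatorics of the cancellations, and it is where the mild regularity on $\pi$ (keeping the stability constants of the self-consistent equation from degenerating) does the most work. Stability near the spectral edges is typically the most delicate part in practice, but away from edges it is a direct implicit-function argument applied to the functional $f$.
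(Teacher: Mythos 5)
Your outline is correct and is essentially the argument the paper relies on: Theorem \ref{thm:bottomCornerTrace} is quoted from \cite{ALLRM}, whose proof proceeds exactly as you describe --- Schur-complement expansion of the diagonal entries $G_{\mu\mu}$ in the $N$-block (Corollary \ref{cor:ResIds}), concentration of the quadratic forms, a fluctuation-averaging step upgrading the entrywise error $O_\prec\bigl((N\eta)^{-1/2}\bigr)$ to $O_\prec\bigl((N\eta)^{-1}\bigr)$ for the averaged quantity, and stability of the self-consistent equation \eqref{eq:mDef} under the regularity assumptions on $\pi$. This is also the same machinery the paper itself adapts in the proof of Lemma \ref{lem:topCornerTraceDiag}, so no further comparison is needed.
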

This result is essentially optimal (up to the definitions in $\prec$) and cannot be gotten naively. The paper also provides essentially optimal bounds on individual resolvent elements; individual diagonal entries of $R_N$ are themselves close to $m$, but the difference is of an order $(N\eta)^{-1/2}$ in the bulk spectrum; this means that in averaging the diagonal elements to get the normalized trace, there is a fair bit of cancellation between different diagonal elements, as there is between independent random variables. The task of finding the ``parts'' of the random variables $(R_N)_{ii}$ which are independent to one another and thus provide this cancellation is the content of a ``Fluctuating Averaging Lemma'' in random matrix theory.

A corollary of this result is the ``Marchenko-Pastur law on small scales''
\begin{corollary}    \label{cor:bottomCornerMeasures}
  For any interval $I \subseteq \mathbb{R}$, we have
  \begin{equation}
    \mu(I) = \varrho(I)  + O_\prec\left(N^{-1}\right)
  \end{equation}
\end{corollary}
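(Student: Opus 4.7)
The plan is to convert the Stieltjes-transform control of Theorem~\ref{thm:bottomCornerTrace} into control of the counting function using the Helffer-Sj\"ostrand formula, the standard route from a ``local law'' on the trace of the resolvent to counting statistics on small scales.

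First I would transfer the bound from $R_N$ to $R_M$. Since $X^*\Sigma X$ and $S = \Sigma^{1/2}XX^*\Sigma^{1/2}$ share all nonzero eigenvalues, the corresponding normalized traces differ by an explicit algebraic term proportional to $(M-N)/(Nz)$. Absorbing this shift into the deterministic side of Theorem~\ref{thm:bottomCornerTrace} yields an analogous estimate
\begin{equation*}
  \frac{1}{M}\Trace R_M(z) - \mathcal{S}(\varrho)(z) = O_\prec\bigl((N\eta)^{-1}\bigr),
\end{equation*}
where $\varrho$ is the deterministic limiting spectral measure of $S$ (determined from $m$ by the same shift).

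Next, for an interval $I = [a,b]$ in the bulk of $\varrho$, I would build a smooth approximation $\chi$ to $\mathbf{1}_I$ that agrees with it outside boundary layers of width $\ell := N^{-1+\epsilon}$, together with an almost-analytic extension $\tilde\chi(z) := (\chi(E) + iy\chi'(E))K(y)$ for a smooth cutoff $K$ supported on $|y|\le 1$. The Helffer-Sj\"ostrand formula then gives
\begin{equation*}
  \Trace \chi(S) - M\int \chi\,d\varrho = \frac{1}{\pi}\int_{\mathbb{C}} \partial_{\bar z}\tilde\chi(z)\,\bigl[\Trace R_M(z) - M\mathcal{S}(\varrho)(z)\bigr]\,d^2 z.
\end{equation*}
Splitting the $y$-integral at the scale $\eta_0 := N^{-1+\epsilon}$, one applies the resolvent bound directly for $|y|\ge \eta_0$ and uses the monotonicity of $\eta \mapsto \eta\,\Imag \mathcal{S}(\mu)(E + i\eta)$ to handle $|y|<\eta_0$; a standard integration-by-parts then bounds the right-hand side by $O_\prec(N^\epsilon)$. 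Finally, $\Trace \chi(S)$ differs from $\mu(I)$ by at most the number of eigenvalues within $\ell$ of an endpoint of $I$, which via the elementary estimate $\mu([E-\ell,E+\ell])/M \lesssim \ell\,\Imag \mathcal{S}(\mu)(E + i\ell) \prec \ell$ is itself $O_\prec(N^\epsilon)$. Normalizing by $M \asymp N$ and letting $\epsilon \to 0$ gives the claimed $O_\prec(N^{-1})$ bound.

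The main obstacle I anticipate is the treatment of intervals whose endpoints approach the edge of $\supp \varrho$: the square-root vanishing of the density there forces one to rescale $\ell$ and $\eta_0$ to the local edge scaling and to use the more refined edge resolvent bounds implicit in Theorem~\ref{thm:ALLRMInformal}. In the bulk the argument is essentially mechanical, with the factor $(N\eta)^{-1}$ from the local law being precisely what is needed to turn into an $O_\prec(1)$ error in the counting function.
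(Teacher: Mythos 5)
Your route is sound, and in fact the paper offers no proof of Corollary~\ref{cor:bottomCornerMeasures} at all: it is simply quoted as a consequence of Theorem~\ref{thm:bottomCornerTrace}, i.e.\ of the averaged local law of \cite{ALLRM}, and the Helffer--Sj\"ostrand argument you outline (smoothed indicator on scale $N^{-1+\epsilon}$, splitting the $y$-integral at $\eta_0$, monotonicity of $y\mapsto y\,\Imag \mathcal{S}(\mu)(E+iy)$ below $\eta_0$, plus a boundary-layer eigenvalue count) is precisely the standard derivation that the cited source relies on, so your proposal matches the intended proof. Two small bookkeeping points: the transfer $\Trace R_M(z)=\Trace R_N(z)-(M-N)/z$ and the corresponding shift of the deterministic term are correct, but note that the paper's $\mu$ has total mass $M$, so the stated $O_\prec(N^{-1})$ bound only makes sense after the normalization you perform in your last step (or with $\varrho$ rescaled to mass $M$); and your boundary-layer display mixes conventions --- it should read $\mu([E-\ell,E+\ell])\lesssim \ell\,\Imag\Trace R_M(E+i\ell)\prec M\ell$, giving a count $O_\prec(N^{\epsilon})$, which is the conclusion you use. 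Finally, the edge case you flag as an obstacle requires no special rescaling at this level of accuracy, since under the paper's regularity assumptions the local law of \cite{ALLRM} holds uniformly up to and beyond the spectral edge, so the same integration applies there.
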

This corollary is important in that it captures the fact that statements about Stieltjes transforms of measures, which we have in great strength thanks to the techniques of \cite{ALLRM}, can be translated into statements about the measures themselves. This statement captures the fact that empirical eigenvalue distribution is given very accurately by a certain deterministic distribution, even on very fine scales: it says ``even in intervals which are predicted to contain only $N^\epsilon$ eigenvalues of $S$ according the to deterministic measure $\varrho$, we do have that the prediction is correct to leading order with very high probability.''


What we will first do in this note is adapt the proof of \ref{thm:bottomCornerTrace} to deal also with the quantity
\begin{equation}
  \frac{1}{M} \Trace\left(zR_M\Sigma + \Sigma(I + m(z)\Sigma)^{-1})\right)
\end{equation}
which leads us to our first main result:
\begin{theorem}    \label{thm:topCornerTrace}
  If $\Sigma$ satisfies the same regularity conditions as required for Theorem \ref{thm:bottomCornerTrace}, then we have
  \begin{equation}
    \frac{1}{M} \Trace\left(zR_M\Sigma + \Sigma(I + m(z)\Sigma)^{-1}\right) = O_\prec \left((N\eta)^{-1}\right)
  \end{equation}
  Using equation \eqref{eq:mDef}, we may rewrite the limit $M^{-1}\Trace\left(-\Sigma(I + m(z)\Sigma)^{-1}\right)$ as
  \begin{equation}
    -\phi^{-1} \left( \frac{1}{zm}+1\right)
  \end{equation}
\end{theorem}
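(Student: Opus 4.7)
The plan is to identify the left-hand side with $\frac{1}{M}\Trace$ of the top-left $M\times M$ block of $G-\Pi$, since by cyclicity $\Trace(zR_M\Sigma) = \Trace(z\Sigma^{1/2}R_M\Sigma^{1/2})$. The argument then closely mirrors the proof of Theorem \ref{thm:bottomCornerTrace}, but with the two blocks of $G$ interchanged. The entry-wise local law of \cite{ALLRM} already controls $|G_{ii}-\Pi_{ii}|$ by roughly $(N\eta)^{-1/2}$ in the bulk, so what must be extracted is the additional $M^{-1/2}$ gain that comes from averaging. As in \cite{ALLRM}, this gain will come from a Fluctuation Averaging Lemma adapted to the indices $i\in\{1,\dots,M\}$ of the top-left block.

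The first step is a Schur complement expansion. Writing $H(z) = \begin{pmatrix}-\Sigma^{-1}&X\\ X^*&-zI\end{pmatrix}$ so that $G=H^{-1}$, and deleting the $i$-th row and column of $H$ for $i\leq M$, the Schur formula yields
\[
  G_{ii} \;=\; \frac{1}{-\tau_i^{-1}-X_{i,\cdot}\,R_N^{(i)}\,X_{i,\cdot}^{*}},
\]
where $R_N^{(i)}$ denotes the analog of $R_N$ built from $X$ with its $i$-th row removed, and is therefore independent of $X_{i,\cdot}$. Conditioning on $R_N^{(i)}$, a Hanson--Wright-type concentration inequality bounds the Gaussian quadratic form's deviation from its conditional mean by $O_\prec((N\eta)^{-1/2})$ in the bulk. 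Up to the normalization implicit in the model, this conditional mean is a normalized trace of $R_N^{(i)}$, which by Theorem \ref{thm:bottomCornerTrace} combined with a rank-one comparison between $R_N^{(i)}$ and $R_N$ equals $m(z)$ up to $O_\prec((N\eta)^{-1})$; this is precisely the value needed for $G_{ii}\approx \Pi_{ii}$.

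Writing $Z_i:=X_{i,\cdot}R_N^{(i)}X_{i,\cdot}^{*}$ and invoking the algebraic identity $G_{ii}-\Pi_{ii} = G_{ii}\Pi_{ii}(Z_i-m)$, we decompose $Z_i - m$ as the sum of a conditional fluctuation $Z_i-\mathbb{E}[Z_i\mid R_N^{(i)}]$ and a deterministic-up-to-$(N\eta)^{-1}$ piece $\mathbb{E}[Z_i\mid R_N^{(i)}]-m$. The latter contribution is already of size $(N\eta)^{-1}$ and, after averaging over $i$, meets the target rate directly. The former is of size $(N\eta)^{-1/2}$ pointwise, but the $i$-indexed summands are only weakly coupled: the rows $X_{i,\cdot}$ are mutually independent, and $R_N^{(i)}$ and $R_N^{(j)}$ differ by two rank-one perturbations. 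A moment-method bound modelled on the Fluctuation Averaging Lemma of \cite{ALLRM} recovers an extra $M^{-1/2}$ factor from this near-independence, producing an aggregate bound of $O_\prec((MN\eta)^{-1/2})$, which for $M\sim N$ is dominated by $(N\eta)^{-1}$. Combining the two contributions yields the main estimate.

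The closing rewriting is algebraic: by \eqref{eq:mDef}, the integral $\int\frac{x}{1+mx}\,d\pi(x)$ equals $\phi^{-1}(m^{-1}+z)$, and this integral is exactly $M^{-1}\Trace(\Sigma(I+m\Sigma)^{-1})$, so a direct rearrangement yields the stated closed form in terms of $m$, $z$, and $\phi$. The principal technical obstacle is the fluctuation averaging step itself: quantifying the weak dependence among the centred fluctuations $Z_i - \mathbb{E}[Z_i\mid R_N^{(i)}]$ requires an iterative expansion based on resolvent identities of the form $R_N^{(i)} = R_N^{(ij)} + (\text{rank-one corrections})$, and the errors accumulated through these successive corrections must be tracked tightly enough not to swallow the $M^{-1/2}$ cancellation that the argument is designed to extract.
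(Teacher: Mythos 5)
Your proposal follows essentially the same route as the paper's proof, which adapts Section 5 of \cite{ALLRM}: a Schur-complement self-consistent expansion of $G_{ii}$ around $\Pi_{ii}$ for $i$ in the top-left block, control of the deterministic bias through the closeness of $\tfrac{1}{N}\Trace R_N$ to $m$, and a fluctuation averaging argument over the $M$ indices to upgrade the entrywise $(N\eta)^{-1/2}$ bound to $(N\eta)^{-1}$. One small correction: the Fluctuation Averaging Lemma you invoke delivers a bound of order $\Psi^2 \asymp \Imag m/(N\eta) \prec (N\eta)^{-1}$ for the averaged centred fluctuations, not the CLT-type $O_\prec((MN\eta)^{-1/2})$ you state; since $\Imag m$ is bounded this weaker (but actually provable) rate already suffices, so your conclusion is unaffected.
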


Just as Marchenko-Pastur Law at small scales followed from Theorem \ref{thm:bottomCornerTrace}, so does our second main result. First we observe that \cite[Theorem~4]{LP} is equivalent to saying that the function $\delta$ from \eqref{eq:deltaDef} is the Radon-Nikodym derivative of the limiting measure for $\nu$ against the deformed Marchenko Pastur law.  Our second main result adds a rate of convergence to this limiting behavior, as follows:
\begin{corollary}    \label{cor:topCornerMeasures}
  We have for any interval $I \subseteq \mathbb{R}$ that
  \begin{equation}
    \mathrm{d}\nu(I) - \delta\mathrm{d}\varrho(I) = O_\prec\left(N^{-1}\right).
  \end{equation}
\end{corollary}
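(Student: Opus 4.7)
The plan is to mirror how \cite{ALLRM} derives Corollary~\ref{cor:bottomCornerMeasures} from Theorem~\ref{thm:bottomCornerTrace}: transport the Stieltjes-transform control of Theorem~\ref{thm:topCornerTrace} to control of the measure $\nu$ on intervals, via Helffer--Sj\"ostrand contour integration against a mollified indicator $\chi_I \approx \mathbf{1}_I$.

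Since $\mathcal{S}(\nu)(z) = \Trace(R_M \Sigma)$, dividing the conclusion of Theorem~\ref{thm:topCornerTrace} by $z$ and setting
\begin{equation}
\psi(z) := -\frac{1}{zM}\Trace\bigl(\Sigma(I+m(z)\Sigma)^{-1}\bigr) = -\frac{1}{\phi}\left(\frac{1}{zm(z)}+1\right)
\end{equation}
(the second equality from \eqref{eq:mDef}) yields $M^{-1}\mathcal{S}(\nu)(z) - \psi(z) = O_\prec(|z|^{-1}(N\eta)^{-1})$. The next task is to identify $\psi$ as the (appropriately normalized) Stieltjes transform of $\delta\, d\varrho$. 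Using $\check m(E) = \pi(\mathcal{H}w(E)+iw(E))$, one computes $\Imag\psi(E+i0^+)$ directly and compares with \eqref{eq:deltaDef} to recover $\pi\delta(E)w(E)$ as the boundary density of $\psi$; this identification is consistent with, and quantitatively refines, the measure-level limit in \cite[Theorem~4]{LP}.

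With $|\mathcal{S}(\nu)(z) - \mathcal{S}(\delta\, d\varrho)(z)| \prec (N\eta)^{-1}$ on $\{\eta \geq N^{-1+\epsilon}\}$ secured, I would invoke the standard machinery of \cite{ErKnYa13} to convert it into an interval bound. Concretely, pick $\chi_I$ smoothly approximating $\mathbf{1}_I$ at scale $N^{-1+\epsilon}$ and use
\begin{equation}
\int \chi_I\, d(\nu - \delta\, d\varrho) = \frac{1}{\pi}\int_{\mathbb{C}} \bar{\partial}\tilde{\chi}_I(z)\bigl(\mathcal{S}(\nu)(z)-\mathcal{S}(\delta\, d\varrho)(z)\bigr)\, dA(z),
\end{equation}
splitting the $z$-integral at $\eta = N^{-1+\epsilon}$ and applying the resolvent bound above, and cheap $O(1)$ control combined with the smallness of $\bar{\partial}\tilde{\chi}_I$ below. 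The remaining gap between $\chi_I$ and $\mathbf{1}_I$ contributes at most $O_\prec(N^{-1})$ via the boundedness of the density $\delta w$ on the bulk of $\supp(\varrho)$.

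The main obstacle is the middle step: uniformly identifying $\psi$ with the Stieltjes transform of $\delta\, d\varrho$ with controlled error, particularly near the spectral edges of $\varrho$, where $w$ vanishes like a square root and the explicit formula for $\delta$ requires care. Once this is in hand, the Helffer--Sj\"ostrand step proceeds essentially as in \cite{ALLRM}.
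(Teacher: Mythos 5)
Your route is essentially the one the paper intends: the paper gives no written proof of Corollary \ref{cor:topCornerMeasures}, merely asserting that it follows from Theorem \ref{thm:topCornerTrace} just as Corollary \ref{cor:bottomCornerMeasures} follows from Theorem \ref{thm:bottomCornerTrace} in \cite{ALLRM}, i.e.\ by identifying the deterministic term as (a normalization of) $\mathcal{S}(\delta\,\mathrm{d}\varrho)$ via \cite[Theorem~4]{LP} and then running the standard Helffer--Sj\"ostrand conversion of \cite{ErKnYa13}, which is exactly your plan. The one step to tighten is the regime $\eta < N^{-1+\epsilon}$, where ``cheap $O(1)$ control'' of $\mathcal{S}(\nu)-\mathcal{S}(\delta\,\mathrm{d}\varrho)$ fails near eigenvalues (the atomic measure $\nu$ makes its transform blow up like $\eta^{-1}$ there); instead one uses the monotonicity of $y \mapsto y\,\Imag \mathcal{S}(\cdot)(E+iy)$ to propagate the bound from scale $N^{-1+\epsilon}$ downward, together with integration by parts for the real part, as in the standard argument.
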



If $\tilde{\Sigma}$ is the shrinkage estimator $\sum_{i=1}^M \delta(\lambda_i)\mathbf{u}_i\mathbf{u}_i^*$, then the above implies an order of error between $\tilde{\Sigma}$ and $\Sigma^{\text{or}}$:
\begin{equation}
    \left|\mathrm{tr}\left(\Sigma^{\text{or}} - \tilde{\Sigma}\right)\right| \prec N^{-1}.
\end{equation}
Boundedness and continuity of $1/\delta$, together with the Portmanteau theorem, can then be used to show:
\begin{equation}
    \mathcal{L}^{\text{MV}}(\Sigma^{\text{or}}, \Sigma) - \mathcal{L}^{\text{MV}}(\tilde{\Sigma}, \Sigma)  = O_\prec( N^{-1}).
\end{equation}
Further, we hypothesize that no \emph{bona fide} shrinkage estimator can make this error asymptotically smaller, in which case this would be the smallest possible excess MV loss within the shrinkage class, as claimed in the abstract.

\section{Relation to Previous Works}
This paper is, firstly, a direct successor to the papers \cite{LP} and \cite{ANS} which advances a program established therein using recent advances in RMT. Another important connection is to the papers \cite{Bai2007} and \cite{VESD}, which discuss a measure which is related to ours: for a fixed unit vector $\mathbf{x}$, they study
\begin{equation}
  F_{Q_1,\mathbf{x}} := \sum_{i=1}^M \abs{\mathbf{u}_i^*\mathbf{x}}^2 \delta_{\lambda_i}
\end{equation}
As in our context, they prove that this measure is close to a deterministic limit, which \cite{VESD} calls $F_{1c, \mathbf{x}}$ (up to adjusting from their context to ours). In particular, they prove
\begin{equation}
  F_{Q_1, \mathbf{x}}(I) - F_{1c, \mathbf{x}}(I) \prec M^{-1/2}
\end{equation}
The earlier paper \cite{Bai2007} establishes the optimality of the factor $M^{-1/2}$ by remarkably establishing the joint asymptotic Gaussian distribution of any $k$ different analytic functions integrated against $F_{Q_1, \mathbf{x}}$ (one way that \cite{VESD} differs from or improves on \cite{Bai2007} is in the very high probability with which the error bounds hold).

We can recover our measure $\mu$ from $F_{Q_1, \mathbf{x}}$: indeed, if $\{\mathbf{v}_1, \dots, \mathbf{v}_M\}$ are the eigenvectors of $\Sigma$, then
\begin{equation}
  \mu = M^{-1} \sum_{j=1}^M \pi_j F_{Q_1, \mathbf{v}_j}
\end{equation}
Similiarly, the limiting deterministic measures satisfy
\begin{equation}
  \delta \mathrm{d}\varrho = M^{-1} \sum_{j=1}^M \pi_j F_{1c, \mathbf{v}_j}
\end{equation}
So our main result, with the error weakened from $O_\prec\left(N^{-1}\right)$ to $O_\prec \left(N^{-1/2}\right)$, is a consequence of the results of \cite{VESD}. \bdrsuppress{ Moreover, our context allows for weaker assumptions on $\Sigma$ and on the moments of $X$ that the context of \cite{VESD} (I am pretty sure, at least; we should check this more).}

The improvement by a factor of $M^{-1/2}$ that occurs after averaging is exactly reminiscent of the central limit theorem, which hints at a sort of independence between the meaures $F_{Q_1, \mathbf{v}_j}$ and $F_{Q_1, \mathbf{v}_{j'}}$. Also note that this improvement of $M^{-1/2}$ does not reflect an improvement of our work over theirs; the error bound of $O_\prec\left(N^{-1/2}\right)$ gotten in \cite{VESD} is optimal, and it is only after the averaging over many different deterministic vectors $\mathbf{x}$ that one sees the the improvement. Thus, our work is to their work as an averaged local law is to an entry-wise local law.

Lastly, one should note that an ultimate goal of the program investigated in \cite{VESD} is to establish the convergence of the CDF of $F_{Q_1, \mathbf{x}}$ to a Brownian bridge, which would amount to finding some ``internal'' independence inside $F_{Q_1, \mathbf{x}}$ in the form of independence between the quantities $\angleBrac{\mathbf{u}_i, \mathbf{x}}$. The ``external'' indepdence that we have hinted at between the measures $F_{Q_1, \mathbf{v}_j}$ and $F_{Q_1, \mathbf{v}_{j'}}$ is not unrelated to this. 

The connections between the papers \cite{Bai2007}, \cite{VESD} and ours are perhaps deeper than we have realized; future work will hopefully bring further connections to light.

\section{Tools}
First, we extend the definition of matrix multiplication to matrices indexed by arbitrary sets. 
\begin{definition}
  Let $\mathcal{I}_i$ be a finite set for $i \in \{1, 2, 3, 4\}$. Let $A$ be a $\mathcal{I}_1 \times \mathcal{I}_2$ matrix, and let $B$ be a $\mathcal{I}_3 \times \mathcal{I}_4$ matrix. We define the matrix product $AB$ to be a $\mathcal{I}_1 \times \mathcal{I}_4$ matrix satisfying
  \begin{equation}
    (AB)_{ij} = \sum_{k \in \mathcal{I}_2 \cap \mathcal{I}_3} A_{ik}B_{kj}
  \end{equation}
\end{definition}

\begin{definition}
  Let $A$ be an invertible matrix $\mathcal{J} \times \mathcal{J}$ matrix and let $S$ be its inverse. We define the minor $S^{(i)}$ via
  \begin{equation}
    S^{(i)} = \left( (A_{jk} : j,k \in \mathcal{J} \setminus \{i\})\right)^{-1}
  \end{equation}
\end{definition}

\begin{lemma}[Resolvent Identities]    \label{lem:resolventIdentities}
  Let $A$ be an invertible $\mathcal{J} \times \mathcal{J}$ matrix and $S = A^{-1}$.
  \begin{enumerate}
  \item If $j,k \in \mathcal{J} \setminus \{i\}$, 
    \begin{equation}
      S^{(i)}_{jk} = S_{jk} - \frac{S_{ji} S_{ik}}{S_{ii}}
    \end{equation}
  \item If $i \neq j$, 
    \begin{equation}
      S_{ij} = -S_{ii} \left(AS^{(i)}\right)_{ij}
    \end{equation}

  \item We have
    \begin{equation}
      \frac{1}{S_{ii}} = S_{ii} - (A^*S^{(i)}A)_{ii}
    \end{equation}

  \end{enumerate}
\end{lemma}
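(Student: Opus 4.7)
The plan is to derive all three identities from the defining relations $AS = SA = I$ together with the block decomposition of $A$ that singles out the $i$-th row and column. The whole lemma is a purely algebraic consequence of the Schur complement formula, so no analytic or probabilistic ingredients enter.

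I would prove part (1) first, since parts (2) and (3) both use it. Define the candidate matrix $M$ on $\mathcal{J}\setminus\{i\}$ by $M_{jk}:=S_{jk}-S_{ji}S_{ik}/S_{ii}$ and verify that $A^{[i]}M$ is the identity on $\mathcal{J}\setminus\{i\}$, where $A^{[i]}$ denotes $A$ with the $i$-th row and column deleted (so $(A^{[i]})^{-1}=S^{(i)}$ by definition). To do so, I write $(A^{[i]}M)_{\ell m}$ in terms of $\sum_{k\neq i}A_{\ell k}S_{km}$ and $\sum_{k\neq i}A_{\ell k}S_{ki}$, then complete each to a full $\sum_k$ and subtract the missing $k=i$ piece. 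The full sums evaluate to $(AS)_{\ell m}=\delta_{\ell m}$ and $(AS)_{\ell i}=0$ (the latter since $\ell\neq i$), and the boundary corrections cancel exactly against the $S_{ji}S_{ik}/S_{ii}$ piece of $M$. Uniqueness of the inverse of $A^{[i]}$ then gives $M=S^{(i)}$.

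For part (2), I would compute $(AS^{(i)})_{ij}=\sum_{k\neq i}A_{ik}S^{(i)}_{kj}$ in the extended matrix multiplication of the paper, substitute part (1), and collapse using $\sum_k A_{ik}S_{kj}=0$ (for $i\neq j$) and $\sum_k A_{ik}S_{ki}=1$; after the cancellations only $-S_{ij}/S_{ii}$ survives, which is the stated identity. Part (3) is the same strategy one level higher: expand $(A^*S^{(i)}A)_{ii}$ in coordinates, substitute part (1) for $S^{(i)}_{jk}$, split the double sum according to whether $j$ or $k$ equals $i$, and assemble the pieces using $(ASA)_{ii}=A_{ii}$ (immediate from $AS=I$) together with the same row and column scalar identities used in part (2). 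Hermiticity of $A$, which holds for the matrix $G$ of Theorem~\ref{thm:ALLRMInformal} where the identity is applied, lets one identify $A^*$ with $A$ in the sum.

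There is no real analytic obstacle: the proof is a pure linear-algebra bookkeeping exercise. The only place requiring care is tracking the intersection $\mathcal{I}_2\cap\mathcal{I}_3$ that restricts each inner sum in the extended matrix multiplication, and consistently using the row-versus-column identities coming from $AS=I$ and $SA=I$. I would sanity-check on a small $3\times3$ example before writing out the general argument to make sure the signs and boundary corrections line up.
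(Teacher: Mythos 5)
Your proofs of parts (1) and (2) are exactly the paper's: part (1) is verified by exhibiting the candidate matrix $S_{jk}-S_{ji}S_{ik}/S_{ii}$ as the inverse of $A$ with the $i$-th row and column removed, completing each restricted sum to a full one and cancelling the $l=i$ boundary term (the paper checks the right-inverse property, you check the left-inverse one -- immaterial), and part (2) is the same substitute-and-collapse computation. For part (3) you genuinely diverge: the paper simply declares it an immediate consequence of Schur's complement formula, whereas you rederive the $(i,i)$ Schur identity by hand from part (1), completing the double sum via $(ASA)_{ii}=A_{ii}$ and the scalar identities $\sum_{j\neq i}A_{ij}S_{ji}=1-A_{ii}S_{ii}$, $\sum_{k\neq i}S_{ik}A_{ki}=1-S_{ii}A_{ii}$; the algebra does close up and yields $1/S_{ii}=A_{ii}-\sum_{j,k\neq i}A_{ij}S^{(i)}_{jk}A_{ki}$, so your route is correct and has the virtue of being self-contained and of making explicit exactly which entries of $A$ versus $A^*$ are used. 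Two small corrections are needed in how you close the argument. First, the matrix $A=G^{-1}$ to which the lemma is applied is \emph{not} Hermitian for $z\in\mathbb{H}$, because of the $-zI$ block; what saves the identification $(A^*)_{ij}=A_{ij}$ is that only entries with $j\neq i$ enter the sum against $S^{(i)}$, and those off-diagonal entries (the $X$, $X^*$ blocks and the vanishing off-diagonal entries of the diagonal blocks) are Hermitian -- this is precisely the remark made in the proof of Corollary \ref{cor:ResIds} that the conclusions are insensitive to the diagonal entries of $A$. Second, note that your computation produces $A_{ii}-(A^*S^{(i)}A)_{ii}$ on the right-hand side rather than the printed $S_{ii}-(A^*S^{(i)}A)_{ii}$; the printed $S_{ii}$ is evidently a typo for $A_{ii}$ (compare Corollary \ref{cor:ResIds}(3), where this term is $-z=A_{\mu\mu}$), so your derivation proves the intended statement, but you should state explicitly that this is what you are proving.
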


\begin{proof}
  \begin{enumerate}
  \item By the definition of an the inverse, it suffices to show that
    \begin{equation}
      \sum_{l\in \mathcal{J} \setminus\{i\}}\left(S_{jl} - \frac{S_{ji} S_{il}}{S_{ii}}\right) A_{lk} = \delta_{jk}
    \end{equation}
    But the left-hand side indeed yields
    \begin{equation}
      \sum_{l\in \mathcal{J} \setminus\{i\}} S_{jl}A_{lk} - \frac{S_{ji}}{S_{ii}} \sum_{l\in \mathcal{J} \setminus\{i\}} S_{il}A_{lk} = (\delta_{jk} -S_{ji}A_{ik}) - \frac{S_{ji}}{S_{ii}} (\delta_{ik} - S_{ii} A_{ik}) = \delta_{jk}
    \end{equation}
    since we have assumed $k \neq i$.
    
  \item This proof is taken from \cite{SSERG2}. Using part $(1)$, we get
    \begin{equation}
      \begin{split}
        \left(AS^{(i)}\right)_{ij} = \sum_{k\in \mathcal{J} \setminus \{i\}} A_{ik} S^{(i)}_{kj} = \sum_{k\in \mathcal{J} \setminus \{i\}} A_{ik} \left(S_{kj} - \frac{S_{ki} S_{ij}}{S_{ii}}\right) \\
        = - A_{ii}S_{ij} - \frac{S_{ij}}{S_{ii}}(1-A_{ii}S_{ii}) = - \frac{S_{ij}}{S_{ii}}
      \end{split}
    \end{equation}
    as desired. 

  \item This is an immediate consequence of Schur's complement formula, wherein if
    \begin{equation}
      M = \begin{pmatrix} B & C \\ D & E \end{pmatrix}
    \end{equation}
    then
    \begin{equation}
      M^{-1} = \begin{pmatrix} (B - CE^{-1}D)^{-1} & * \\ * & * \end{pmatrix}
    \end{equation}
    provided all the inverses exist. To see how Schur's complement formula applies, it is helpful to write
    \begin{equation}
      (A^*S^{(i)}A)_{ii} = (e_i^*A^*)S^{(i)}(Ae_i)
    \end{equation}
    It is of course crucial that we use the correct definition of matrix multiplication here. 

  \end{enumerate}
\end{proof}

Next we apply these general matrix algebra facts to our specific resolvent $G$. 
\begin{corollary} [Resolvent Identities for $G$]    \label{cor:ResIds}
  \begin{enumerate}
  \item For $s, t \neq r$ we have
    \begin{equation}
      G^{(r)}_{st} = G_{st} - \frac{G_{sr} G_{rt}}{G_{rr}}
    \end{equation}
  \item If $\mu\neq \nu \in \mathcal{I}_N$, then
    \begin{equation}
      G_{\mu\nu} = -G_{\mu\mu} (X^*G^{(\mu)})_{\mu \nu} = -G_{\nu\nu} (G^{(\nu)}X)_{\mu \nu}
    \end{equation}
    Similarly if $i \neq j \in \mathcal{I}_M$, then
    \begin{equation}
      G_{ij} = -G_{ii}(XG^{(i)})_{ij} = -G_{jj} (G^{(j)}X^*)_{ij}
    \end{equation}
    Lastly if $i \in \mathcal{I}_M$ and $\mu \in \mathcal{I}_N$, then
    \begin{equation}
      G_{i\mu} = -G_{\mu\mu} (G^{(\mu)}X)_{i\mu}, \quad G_{\mu i} -G_{\mu\mu} (X^*G^{(\mu)})_{\mu i}
    \end{equation}

  \item We have
    \begin{equation}
      \frac{1}{G_{\mu\mu}} = -z - (X^*G^{(\mu)}X)_{\mu \mu}
    \end{equation}
    
  \end{enumerate}
\end{corollary}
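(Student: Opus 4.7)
The plan is to apply Lemma \ref{lem:resolventIdentities} with $S = G$ and $A$ the linearization matrix whose inverse is $G$, exploiting a structural fact enforced by Assumption \ref{assump:main}: \emph{both} diagonal blocks of $A$ are diagonal matrices, namely $-\Sigma^{-1}$ on $\mathcal{I}_M$ (diagonal because $\Sigma$ is) and $-zI$ on $\mathcal{I}_N$. The only non-diagonal parts of $A$ are the off-diagonal blocks $X$ and $X^*$. This is what will make the various sums appearing in Lemma \ref{lem:resolventIdentities} collapse to products involving only $X$ and $X^*$.

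Part (1) is Lemma \ref{lem:resolventIdentities}(1) verbatim, with no calculation needed. For part (2), I would take $\mu \neq \nu \in \mathcal{I}_N$ and apply Lemma \ref{lem:resolventIdentities}(2) to obtain $G_{\mu\nu} = -G_{\mu\mu}(AG^{(\mu)})_{\mu\nu}$. Splitting the sum $(AG^{(\mu)})_{\mu\nu} = \sum_{k \neq \mu} A_{\mu k} G^{(\mu)}_{k\nu}$ according to whether $k \in \mathcal{I}_M$ or $k \in \mathcal{I}_N \setminus \{\mu\}$, diagonality of the lower-right block of $A$ forces $A_{\mu k} = 0$ in the second sum, leaving only $(X^* G^{(\mu)})_{\mu\nu}$. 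The twin form $G_{\mu\nu} = -G_{\nu\nu}(G^{(\nu)} X)_{\mu\nu}$ follows from the mirrored identity $S_{ij} = -S_{jj}(S^{(j)} A)_{ij}$, either by repeating the proof of Lemma \ref{lem:resolventIdentities}(2) with left/right roles swapped or by applying that lemma to $A^T$. The cases $i \neq j \in \mathcal{I}_M$ and the mixed indices $G_{i\mu}$, $G_{\mu i}$ all proceed by the same recipe: in each instance, diagonality of one diagonal block of $A$ annihilates every contribution except the one coming from $X$ or $X^*$.

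For part (3), I would apply Lemma \ref{lem:resolventIdentities}(3) at index $\mu \in \mathcal{I}_N$. The $\mu$-th row of $A$, restricted to $\mathcal{J} \setminus \{\mu\}$, vanishes on $\mathcal{I}_N \setminus \{\mu\}$ and equals $X^*_{\mu,\cdot}$ on $\mathcal{I}_M$; the $\mu$-th column behaves symmetrically, reducing to $X_{\cdot,\mu}$ on $\mathcal{I}_M$. The quadratic form in the lemma therefore collapses to $(X^* G^{(\mu)} X)_{\mu\mu}$, and since $A_{\mu\mu} = -z$ the identity becomes $1/G_{\mu\mu} = -z - (X^* G^{(\mu)} X)_{\mu\mu}$, as claimed.

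The only real subtlety, and it is a bookkeeping one rather than an analytic obstacle, is the generalized matrix multiplication of Definition 3.1: $G^{(\mu)}$ is indexed by the hybrid set $(\mathcal{I}_M \sqcup \mathcal{I}_N) \setminus \{\mu\}$, and expressions like $X^* G^{(\mu)} X$ must be parsed with the intersection-of-index-sets convention so that the $\mathcal{I}_M$-indexed factors of $X$ and $X^*$ pair correctly with the $\mathcal{I}_M$-portion of the minor. Once this is verified, the whole corollary reduces to three one-line specializations of the preceding lemma.
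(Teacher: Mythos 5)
Your proposal is correct and follows essentially the same route as the paper, which simply observes that all three identities are specializations of Lemma \ref{lem:resolventIdentities} once the generalized matrix product is used and the block structure of $A$ (diagonal blocks $-\Sigma^{-1}$ and $-zI$ being diagonal) kills all terms except those coming from $X$ and $X^*$. You in fact supply details the paper leaves implicit — the mirrored identity $S_{ij}=-S_{jj}(S^{(j)}A)_{ij}$ needed for the twin forms, and the reading of part (3) with $A_{\mu\mu}=-z$ (i.e.\ the Schur-complement form $1/S_{ii}=A_{ii}-\cdots$, which is what the lemma's proof actually yields) — so nothing is missing.
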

\begin{proof}
  These all follow from the lemma; only the correct definition of matrix multiplication must be used, and one should note that many of the lemmas' conclusions are insensitive to the diagonal entries of $A$, so that one may see a
  $$ \begin{pmatrix} -\Sigma^{-1} & X \\ X^* & 0 \end{pmatrix} $$
  when one expects to see a
  $$ \begin{pmatrix} -\Sigma^{-1} & X \\ X^* & -zI \end{pmatrix} $$     
\end{proof}

Let us cite one of the main results of \cite{ALLRM}
\begin{theorem}[Entrywise Local Law]    \label{thm:EWLocalLaw}
  For any deterministic unit vectors $\mathbf{v},\mathbf{w}\in \mathbb{R}^{\mathcal{I}}$, one has
  \begin{equation}
    \angleBrac{\mathbf{v}, (G - \Pi)\mathbf{w}} = O_\prec (\Psi)
  \end{equation}
  where
  \begin{equation}
    \Psi:= \Psi(z) = \sqrt{\frac{\Imag  m(z)}{N\eta}} + \frac{1}{N\eta}
  \end{equation}
\end{theorem}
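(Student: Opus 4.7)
The plan is to follow the standard self-consistent equation approach for sample covariance matrices, working with the linearized resolvent $G$ and exploiting the resolvent identities of Corollary \ref{cor:ResIds}. Fix $z = E + i\eta \in \mathbb{H}$. First, I would derive an approximate self-consistent equation for the diagonal entries $G_{\mu\mu}$ with $\mu \in \mathcal{I}_N$. By part (3) of Corollary \ref{cor:ResIds},
\begin{equation*}
  \frac{1}{G_{\mu\mu}} = -z - (X^* G^{(\mu)} X)_{\mu\mu}.
\end{equation*}
Since the column $X_{\cdot\mu}$ is independent of $G^{(\mu)}$, a Hanson--Wright-style large deviation estimate gives that $(X^* G^{(\mu)} X)_{\mu\mu}$ concentrates around $N^{-1}\Trace(\Sigma G^{(\mu)})$ with error of order $\Psi$. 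Part (1) of the corollary lets us replace $G^{(\mu)}$ by $G$ at the same accuracy, and the $\mathcal{I}_M$-block of $N^{-1}\Trace(\Sigma G)$ is formally expected to be close to $-N^{-1}\Trace(\Sigma^2(I+m\Sigma)^{-1})$, which by \eqref{eq:mDef} equals $-\phi^{-1}(1 + 1/(zm))$. Substituting this closes a scalar self-consistent equation for $G_{\mu\mu}$ that coincides with the defining equation of $m(z)$.

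The next step is a stability analysis. Because $m$ is characterized on $\mathbb{H}$ as the unique solution of $f(m)=z$ with $\Imag m > 0$, and because $f$ is a Nevanlinna-type function whose derivative one can control quantitatively using the regularity of $\pi$, I would establish the following perturbative statement: any complex number $x$ with $\Imag x$ bounded below and $|f(x) - z| = O_\prec(\Psi)$ must satisfy $|x - m| = O_\prec(\Psi)$. Applying this with $x = G_{\mu\mu}$ yields $G_{\mu\mu} - m = O_\prec(\Psi)$, and an analogous argument using the counterpart of the Schur identity for $i \in \mathcal{I}_M$ yields $G_{ii} + \tau_i/(1 + m\tau_i) = O_\prec(\Psi)$. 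Off-diagonal entries are handled by part (2) of Corollary \ref{cor:ResIds}, which expresses each as a single contraction with an independent column (or row) of $X$; concentration then gives bounds of the same order $\Psi$.

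To promote entrywise bounds to the bilinear estimate $\angleBrac{\mathbf{v}, (G - \Pi)\mathbf{w}} = O_\prec(\Psi)$ for arbitrary deterministic unit vectors, a naive triangle inequality would lose a factor of $N$ and must be avoided. Instead, I would expand the bilinear form in the standard basis, separate the diagonal contribution (which is already controlled entrywise) from the off-diagonal contribution, and on the latter apply a fluctuation-averaging argument of the sort alluded to in the discussion preceding Corollary \ref{cor:bottomCornerMeasures}. The independence of distinct columns of $X$ provides the cancellations that produce the averaged scale $\Psi$ rather than the weaker entrywise scale one would naively obtain from Cauchy--Schwarz over $N^2$ terms.

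The whole argument has to be organized as a bootstrap in $\eta$. Starting at $\eta \sim 1$, where the crude bound $|G - \Pi| \lesssim 1$ is immediate, I would show that a rough a priori bound $|G - \Pi| \prec \Psi^{1/2}$ self-improves to $|G - \Pi| \prec \Psi$; together with Lipschitz continuity of both sides in $\eta$ and a polynomial grid in the spectral parameter, this descends the strong bound all the way to $\eta$ just above $N^{-1}$. I expect the principal obstacle to lie precisely here: the stability of the self-consistent equation degenerates near the spectral edges, and one needs the regularity assumptions on $\pi$ to guarantee that $\Imag m$ remains comparable to the quantity $\Psi^2 N\eta = \Imag m + (N\eta)^{-1}$ throughout the relevant domain, so that the self-improvement step does not lose a power of $\eta$ and genuinely reaches the optimal scale $(N\eta)^{-1}$ at the edge.
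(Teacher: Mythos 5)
First, note that the paper does not prove this statement at all: it is quoted verbatim as one of the main results of \cite{ALLRM} (Knowles--Yin), so the only meaningful comparison is with the proof given there. Your sketch of the diagonal analysis is essentially the route taken in that reference for the \emph{entrywise} law: Schur complement identities (part (3) of Corollary \ref{cor:ResIds}), large-deviation concentration of $(X^*G^{(\mu)}X)_{\mu\mu}$ around a partial trace, replacement of minors via part (1), stability of the self-consistent equation $f(m)=z$, and a bootstrap/self-improvement in $\eta$ down to scales just above $N^{-1}$, with the edge behavior controlled through the regularity of $\pi$. That part is sound in outline.

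The genuine gap is in the final step, where you pass from entrywise bounds to the bilinear estimate $\angleBrac{\mathbf{v},(G-\Pi)\mathbf{w}}=O_\prec(\Psi)$ for \emph{arbitrary} deterministic unit vectors. Despite its name in this paper, the stated theorem is the anisotropic (isotropic-type) local law, and fluctuation averaging is not the tool that proves it. Fluctuation averaging exploits the partial-expectation structure of quantities such as $(1-\mathbb{E}_i)\tfrac{1}{G_{ii}}$ summed along the diagonal, and is what upgrades the entrywise law to the \emph{averaged} law (Theorem \ref{thm:bottomCornerTrace}); it does not control the off-diagonal bilinear sum $\sum_{a\neq b}\bar v_a(G-\Pi)_{ab}w_b$, whose $N^2$ terms are not of the partial-expectation form and for which Cauchy--Schwarz over entrywise bounds loses a polynomial factor, exactly as you observe must be avoided. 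In \cite{ALLRM} (and in the earlier isotropic law of Bloemendal--Erd\H{o}s--Knowles--Yau--Yin for sample covariance matrices) this step is carried out by a separate high-moment argument: one estimates $\mathbb{E}\,\abs{\angleBrac{\mathbf{v},(G-\Pi)\mathbf{w}}}^{2p}$ for large $p$, expanding the off-diagonal resolvent entries in the columns of $X$ and tracking the cancellations among the resulting monomials (the ``polynomialization'' method of Knowles--Yin), before concluding with Markov's inequality. Without this ingredient (or an equivalent substitute), your argument only delivers the entrywise bound $\max_{a,b}\abs{(G-\Pi)_{ab}}\prec\Psi$ together with the averaged law, not the statement for general $\mathbf{v},\mathbf{w}$ claimed in the theorem.
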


\section{Proof of Theorem \ref{thm:topCornerTrace}}    \label{sec:topCornerTrace}

First, we prove a partial result. We do not include many details, and only show how section 5 of \cite{ALLRM} may be quickly adapted to our setting. 
\begin{lemma}    \label{lem:topCornerTraceDiag}
  If $\Sigma$ is diagonal, then Theorem \ref{thm:topCornerTrace} holds. 
\end{lemma}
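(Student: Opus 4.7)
The plan is to reduce the statement to an averaged local law for the top $\mathcal{I}_M$-block of $G$ and then extract the square-root improvement over the entrywise bound via a fluctuation averaging argument modeled on \cite{ALLRM} Section 5.

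Since $\Sigma = \diag(\tau_1,\dots,\tau_M)$, the block decomposition of $G$ gives $G_{ii} = z\tau_i (R_M)_{ii}$ for every $i \in \mathcal{I}_M$, while the corresponding diagonal entry of $\Pi$ is $\Pi_{ii} = -\tau_i/(1 + m(z)\tau_i)$. Hence
\begin{equation}
  \frac{1}{M}\Trace\bigl(zR_M\Sigma + \Sigma(I + m\Sigma)^{-1}\bigr) = \frac{1}{M}\sum_{i \in \mathcal{I}_M}(G_{ii} - \Pi_{ii}),
\end{equation}
and the task is to show this averaged discrepancy is $O_\prec((N\eta)^{-1})$, improving the entrywise bound $O_\prec(\Psi)$ from Theorem \ref{thm:EWLocalLaw}.

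First I would apply Lemma \ref{lem:resolventIdentities}(3): diagonality of $\Sigma$ kills the off-diagonal contributions from row $i$ of $-\Sigma^{-1}$, so
\begin{equation}
  \frac{1}{G_{ii}} = -\tau_i^{-1} - (XG^{(i)}X^*)_{ii} = -\tau_i^{-1} - \sum_{\mu,\nu \in \mathcal{I}_N} X_{i\mu}\, G^{(i)}_{\mu\nu}\, \overline{X_{i\nu}}.
\end{equation}
Conditional on $G^{(i)}$ the second term is a Gaussian quadratic form in the $i$th row of $X$; its conditional mean equals $N^{-1}\Trace(G^{(i)}|_{\mathcal{I}_N})$ up to the normalization of $X$, which by Theorem \ref{thm:bottomCornerTrace} together with the minor-swap bound $G^{(i)}_{\mu\nu} - G_{\mu\nu} = O_\prec(\Psi^2)$ equals $m(z) + O_\prec((N\eta)^{-1})$. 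Denoting the mean-zero fluctuation by $Z_i$ and Taylor-expanding $G_{ii}$ around $\Pi_{ii}$, the self-consistent equation \eqref{eq:mDef} cancels the deterministic leading term and yields
\begin{equation}
  G_{ii} - \Pi_{ii} = -\Pi_{ii}^2\, Z_i + \mathcal{E}_i,
\end{equation}
where $\mathcal{E}_i = O_\prec(\Psi^2) = O_\prec((N\eta)^{-1})$ in the bulk.

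The main step is then the fluctuation averaging estimate
\begin{equation}
  \frac{1}{M}\sum_{i \in \mathcal{I}_M} \Pi_{ii}^2\, Z_i = O_\prec\bigl((N\eta)^{-1}\bigr),
\end{equation}
which I would obtain by the high-moment/graphical bookkeeping of \cite{ALLRM} Section 5: bound $\mathbb{E}|\cdot|^{2p}$ by expanding each $Z_i$ as a sum of quadratic Gaussian monomials, apply Wick's theorem together with minor-swap identities $G^{(i)} \to G^{(ij)}$ to factor expectations, and estimate each remaining graph via entrywise $\Psi$-bounds. Pairing constraints force the off-diagonal $G$-entries to appear in squares, which generates the $\Imag m/(N\eta)$ gain per paired $X$-edge that is the source of the square-root improvement. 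The weights $\Pi_{ii}^2 = \tau_i^2/(1 + m\tau_i)^2$ are uniformly bounded by the standing regularity assumptions on $\pi$ and enter only as harmless multiplicative constants. The rewriting of the deterministic part as $-\phi^{-1}(1/(zm) + 1)$ is then pure algebra from \eqref{eq:mDef}.

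The main obstacle is precisely this last step: the fluctuation averaging of \cite{ALLRM} is stated for the $\mathcal{I}_N$-block with uniform weights, and transferring the graphical argument to the $\mathcal{I}_M$-block with $\tau_i$-dependent weights requires a careful re-tracing of which resolvent minors and which $X$-edges appear. I expect only cosmetic changes, since row-independence of $X$ is symmetric between the two block indices and the bounded weights stay outside the combinatorics, but this bookkeeping is where essentially all the work sits.
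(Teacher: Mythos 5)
Your proposal follows essentially the same route as the paper's proof: the Schur-complement representation of $G_{ii}$ for $i \in \mathcal{I}_M$ (equation (5.15) of \cite{ALLRM}), a Taylor expansion around $\Pi_{ii}$ using the self-consistent equation, fluctuation averaging of the weighted $Z_i$ to gain the extra $\Psi$ factor, and the replacement $\abs{m_N - m} = O_\prec((N\eta)^{-1})$. The only difference is that the paper simply invokes \cite{ALLRM}'s Lemma 5.6 and the bound $\Psi_\Theta \prec (N\eta)^{-1/2}$ as black boxes, whereas you propose re-running the high-moment graphical bookkeeping with the $\tau_i$-dependent weights, which is more work than the paper does but the same idea.
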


\begin{proof}
  This proof amounts to adapting section 5 of \cite{ALLRM} to address the top-left corner of thei matrix $G$. We use the notation of \cite{ALLRM} without further comment.

  Equation (5.15) of \cite{ALLRM} reads
  \begin{equation}
    1(\Xi) G_{ii} = 1(\Xi)\frac{-\sigma_i}{1 + m_N \sigma_i + \sigma_i Z_i + O_\prec\left(\sigma_i \Psi_\Theta^2\right)}
  \end{equation}
  A Taylor expansion on this quantity, justified because $Z_i = O_\prec \left( \Psi_\Theta\right)$ by Lemma 5.2 of \cite{ALLRM}, yields
  \begin{equation}
    1(\Xi) G_{ii} = 1(\Xi)\left[\frac{-\sigma_i}{1 + m_N \sigma_i } + \frac{\sigma_i}{(1 + m_N \sigma_i)^2}Z_i + O_\prec\left( \sigma_i \Psi_\Theta^2\right) \right]
  \end{equation}
  Averaging now over $i$ yields
  \begin{equation}    \label{eq:afterAveraging}
    1(\Xi)\frac{1}{M} \sum_i G_{ii} = 1(\Xi) \frac{1}{M} \sum_{i} \frac{-\sigma_i}{1 + m_N \sigma_i} + [Z]_M + O_\prec \left(\Psi_\Theta^2\right)
  \end{equation}
  Using that $(1 - \mathbb{E}_i) \frac{1}{G_{ii}} = Z_i$, we have by lemma 5.6 of \cite{ALLRM} that
  \begin{equation}    \label{eq:ZsubMBound}
    [Z]_M = O_\prec\left(\Psi_\Theta^2\right)
  \end{equation}
  Using equation (3.10) of \cite{ALLRM} to bound $\Theta$, we have $\Psi_\Theta \prec \frac{1}{\sqrt{N\eta}}$, so that our equation \eqref{eq:ZsubMBound} becomes
  \begin{equation}
    [Z]_M = O_\prec\left((N\eta)^{-1}\right)
  \end{equation}
  Equation (3.1) of \cite{ALLRM} also yields that $\Xi$ holds with high probability, so that equation \eqref{eq:afterAveraging} yields
  \begin{equation}
    \frac{1}{M} \sum_i G_{ii} = \frac{1}{M} \sum_{i} \frac{-\sigma_i}{1 + m_N \sigma_i} + O_\prec\left((N\eta)^{-1}\right)
  \end{equation}
  Furthermore, it is a result of section 5 of \cite{ALLRM} that
  \begin{equation}
    \abs{m_N - m} = O_\prec\left( (N\eta)^{-1}\right)
  \end{equation}
  so that $m_N$ may be replaced with $m$ (it is noted that $\abs{1 + m(z) \sigma_i} \geq \tau$ under our regularity assumptions). Finally, $\frac{1}{M} \sum_{i} \frac{-\sigma_i}{1 + m_N \sigma_i}$ is precisely $\frac{1}{M} \sum_i \Pi_{ii}$, and
  $$\sum_i G_{ii} = \Trace z\Sigma^{1/2} R_M\Sigma^{1/2} = z\Trace R_M \Sigma$$
  so that we are done.

\end{proof}

\begin{remark}
  In the recent paper \cite{VESD}, a very similar result to our \ref{lem:topCornerTraceDiag} is proven in their equation (3.13), under much more general moment assumptions on $X$; however, the resolvent used in that paper differs slightly from the one used in this paper. We hope to use some of their techniques perhaps to weaken some of the moment assumptions in our work. 
\end{remark}

\section*{Acknowledgements}
This work was supported by the US Air Force Office of Scientific Research, Lab Task number 19RYCOR036.  The views and opinions of this paper do not necessarily reflect the official positions of the Air Force.  The public affairs approval number of this document is AFRL-2021-2586.

\bibliographystyle{amsplain}

\end{document}